\newcommand{\subj}[1]{\par\noindent{\bf AMS Subject Classifications: }#1.}
\newcommand{\keyw}[1]{\par\noindent{\bf Keywords: }#1.}
\numberwithin{equation}{section}
\numberwithin{figure}{section}
\newtheorem{theorem}{Theorem}[section]
\newtheorem{lemma}[theorem]{Lemma}
\theoremstyle{definition}
\newtheorem{definition}[theorem]{Definition}
\theoremstyle{remark}
\newtheorem{remark}[theorem]{Remark}
\date{}
\newcommand{\ijde}
{\vspace{-1in}\normalsize\flushleft
This is a preprint of a paper whose final and definite form will be published in:\\
Int. J. Difference Equ. ({\tt http://campus.mst.edu/ijde}).\\
Submitted Aug 24, 2012; Revised Nov 19, 2012; Accepted Nov 19, 2012.\\\vspace{1mm}\hrule\vspace{5mm}}
\begin{document}

\title{\ijde \center\Large\bf A Non-Differentiable
Quantum Variational Embedding in Presence of Time Delays}
\author{{\bf Gast\~{a}o S. F. Frederico${}^{a, b}$}
and {\bf Delfim F. M. Torres${}^{b}$}\\
{\tt \{gastao.frederico, delfim\}@ua.pt}\\[0.3cm]
${}^{a}$~Greg\'{o}rio Semedo University, Luanda, Angola\\[0.3cm]
${}^{b}$~Center for Research and Development in Mathematics and Applications\\
Department of Mathematics, University of Aveiro\\
3810-193 Aveiro, Portugal}
\maketitle
\thispagestyle{empty}


\begin{abstract}
We develop Cresson's non-differentiable embedding to quantum
problems of the calculus of variations and optimal control with
time delay. Main results show that the dynamics of non-differentiable
Lagrangian and Hamiltonian systems with time delays can be
determined, in a coherent way, by the least-action principle.
\end{abstract}


\subj{49K05, 49S05, 26A24}

\keyw{non-differentiability, scale calculus of variations,
scale optimal control, Euler--Lagrange equations,
embedding, coherence, time delay, Hamiltonian systems}


\section{Introduction}

Lagrangian systems play a fundamental role describing
motion in mechanics. The importance of such systems
is related to the fact that they can be derived
via the least-action principle using differentiable manifolds \cite{ar}.
Nevertheless, some important physical systems involve functions
that are non-differentiable.

A non-differentiable calculus was introduced in 1992 by Nottale
\cite{Nottale:1992,Nottale:1999}. A rigorous foundation to
Nottale's scale-relativity theory was recently given by
Cresson \cite{CD:Cresson:2005,CD:Cresson:2006,Cresson:2011}.
The calculus of variations developed in \cite{Cresson:2011}
cover sets of non-differentiable curves,
by substituting the classical derivative by
a new complex operator, known as the \emph{scale derivative}.
In \cite{alto,alto1} Almeida and Torres obtain several
Euler--Lagrange equations for variational functionals and
isoperimetric problems of the calculus of variations
defined on a set of H\"{o}lder curves.

The embedding procedure, introduced for stochastic processes in \cite{MR2337677},
can always be applied to Lagrangian systems \cite{MyID:228,MyID:226}.
In this work we prove that the embedding of Lagrangian and Hamiltonian
systems with time delays, via the least-action principle,
respect the principle of coherence. For the importance of
variational and control systems with delays we refer the reader to
\cite{MyID:231} and references therein.

The article is organized as follows. A brief review of the quantum calculus of
\cite{Cresson:2011}, which extends the classical differential
calculus to non-differentiable functions, is given in Section~\ref{sec:2}.
In Section~\ref{sec:cNT} we discuss the non-differentiable embedding within the time delay formalism:
Section~\ref{sec:CV} is devoted to the development of the non-differentiable embedding to variational
problems with time delay, where a causal and coherent embedding is obtained
by restricting the set of variations; in Section~\ref{sec:OP}
we prove that the non-differentiable embedding
of the corresponding Hamiltonian formalism is also coherent.


\section{The Quantum Calculus of Cresson}
\label{sec:2}

Let $\mathbb{X}^d$ denote the set $\mathbb{R}^{d}$ or $\mathbb{C}^{d}$,
$d \in \mathbb{N}$, and $I$ be an open set in $\mathbb{R}$
with $[t_1,t_2]\subset I$, $t_1<t_2$.
We denote by $\mathcal{G}\left(I,\mathbb{X}^d\right)$
the set of functions $f:I \rightarrow \mathbb{X}^d$
and by $\mathcal{C}^{0}\left(I,\mathbb{X}^d\right)$
the subset of functions of $\mathcal{G}\left(I,\mathbb{X}^d\right)$
that are continuous.

\begin{definition}[The $\epsilon$-left and $\epsilon$-right quantum derivatives]
Let $f\in \mathcal{C}^{0}\left(I, \mathbb{R}^{d}\right)$. For all
$\epsilon>0$, the $\epsilon$-left and $\epsilon$-right quantum derivatives of
$f$, denoted respectively by $\Delta_{\epsilon}^{-}f$
and $\Delta_{\epsilon}^{+}f$, are defined by
\begin{equation*}
\Delta_{\epsilon}^{-}f(t)=\frac{f(t)-f(t-\epsilon)}{\epsilon}
\quad \text{ and } \quad
\Delta_{\epsilon}^{+}f(t)=\frac{f(t+\epsilon)-f(t)}{\epsilon} \, .
\end{equation*}
\end{definition}

\begin{remark}
The $\epsilon$-left and $\epsilon$-right quantum derivatives
of a continuous function $f$ correspond to the classical derivative
of the $\epsilon$-mean function $f_{\epsilon}^{\sigma}$ defined by
\begin{equation*}
f_{\epsilon}^{\sigma}(t)=\frac{\sigma}{\epsilon}\int_{t}^{t+\sigma\epsilon}f(s)ds\, ,
\quad \sigma=\pm \, .
\end{equation*}
\end{remark}

Next we define an operator which generalize the classical
derivative.

\begin{definition}[The $\epsilon$-scale derivative]
\label{def:qd}
Let $f\in \mathcal{C}^{0}\left(I,\mathbb{R}^{d}\right)$.
For all $\epsilon>0$, the $\epsilon$-scale
derivative of $f$, denoted by $\frac{\square_{\epsilon}f}{\square t}$,
is defined by
\begin{gather*}
\frac{\square_{\epsilon}f}{\square t}
=\frac{1}{2}\left[\left(\Delta_{\epsilon}^{+}f
+\Delta_{\epsilon}^{-}f\right)
-i\left(\Delta_{\epsilon}^{+}f
-\Delta_{\epsilon}^{-}f\right)\right],
\end{gather*}
where $i$ is the imaginary unit.
\end{definition}

\begin{remark}
If $f$ is differentiable, we can take the limit of the scale
derivative when $\epsilon$ goes to zero. We then obtain the
classical derivative $\frac{df}{dt}$ of $f$.
\end{remark}

We also need to extend the scale derivative to complex valued
functions.

\begin{definition}
Let  $f\in \mathcal{C}^{0}\left(I,\mathbb{C}^{d}\right)$
be a continuous complex valued function.
For all $\epsilon>0$, the $\epsilon$ scale derivative of $f$,
denoted by $\frac{\square_{\epsilon}f}{\square t}$, is defined by
\begin{gather*}
\frac{{\square}_{\epsilon}f}{{\square}t}
=\frac{{\square}_{\epsilon}\textrm{Re}(f)}{\square t}
+i\frac{\square_{\epsilon}\textrm{Im}(f)}{\square t} \, ,
\end{gather*}
where $\textrm{Re}(f)$ and $\textrm{Im}(f)$ denote the real and
imaginary part of $f$, respectively.
\end{definition}

In Definition~\ref{def:qd}, the $\epsilon$-scale derivative
depends on $\epsilon$, which is a free parameter related to the
smoothing order of the function. This brings many difficulties in
applications to Physics, when one is interested in particular
equations that do not depend on an extra parameter. To solve these
problems, the authors of \cite{Cresson:2011} introduced a procedure
to extract information independent of $\epsilon$ but related
with the mean behavior of the function.

\begin{definition}
Let ${C^0_{conv}}\left(I\times
(0,1),\mathbb{R}^{d}\right)\subseteq {C^0}\left(I\times
(0,1),\mathbb{R}^{d}\right)$ be such that for any function $f \in
{C^0_{conv}}\left(I\times (0,1),\mathbb{R}^{d}\right)$ the
$\lim_{\epsilon\to 0}f(t,\epsilon)$
exists for any $t\in I$; and $E$ be a complementary of
${C^0_{conv}}\left(I\times (0,1),\mathbb{R}^{d}\right)$ in
${C^0}\left(I\times (0,1),\mathbb{R}^{d}\right)$. We define
the projection map $\pi$ by
$$
\begin{array}{lcll}
\pi: & {C^0_{conv}}\left(I\times (0,1),\mathbb{R}^{d}\right)
\oplus E & \to & {C^0_{conv}}(I\times \left(0,1),\mathbb{R}^{d}\right)\\
& f_{conv}+f_E  & \mapsto & f_{conv}
\end{array}
$$
and the operator $\left< \cdot \right>$ by
$$
\begin{array}{lcll}
\left< \cdot \right>: & {C^0}\left(I\times (0,1),\mathbb{R}^{d}\right)
& \to & {C^0}\left(I,\mathbb{R}^{d}\right)\\
& f & \mapsto & \left< f \right>: t\mapsto
\displaystyle\lim_{\epsilon\to 0}\pi(f)(t,\epsilon)\,.
\end{array}
$$
\end{definition}

We now introduce the quantum derivative of $f$ without
the dependence of $\epsilon$ \cite{Cresson:2011}.

\begin{definition}
\label{def:ourHD}
The quantum derivative of $f$ in the space
$\mathcal{C}^{0}\left(I, \mathbb{R}^{d}\right)$
is given by the rule
\begin{equation}
\label{eq:scaleDer}
\frac{\Box f}{\Box t}=\left<
\frac{{\Box_{\epsilon}}f}{\Box t} \right>.
\end{equation}
\end{definition}

The scale derivative \eqref{eq:scaleDer} has some nice properties.
Namely, it satisfies a Leibniz and a Barrow rule. First let us
recall the definition of an $\alpha$-H\"{o}lderian function.

\begin{definition}[H\"{o}lderian function of exponent $\alpha$]
Let $f\in C^0\left(I, \mathbb{R}^{d}\right)$. We say that $f$ is
$\alpha$-H\"{o}lderian, $0<\alpha<1$, if for all $\epsilon>0$ and
all $t$, $t'\in I$ there exists a constant $c>0$ such that
$|t-t'|\leqslant\epsilon$ implies
$\|f(t)-f(t')\|\leqslant c\epsilon^{\alpha}$,
where $\|\cdot\|$ is a norm in $\mathbb{R}^{d}$.
The set of  H\"{o}lderian functions of H\"{o}lder exponent $\alpha$
is denoted by $H^\alpha(I,\mathbb{R}^{d})$.
\end{definition}

In what follows, we will frequently use $\square$ to denote the
scale derivative operator $\frac{{\square}}{{\square}t}$.

\begin{theorem}[The quantum Leibniz rule \cite{Cresson:2011}]
\label{theo:mult} For $f\in H^\alpha\left(I,
\mathbb{R}^{d}\right)$ and $g\in H^\beta\left(I,
\mathbb{R}^{d}\right)$, with $\alpha+\beta>1$, one has
\begin{equation}
\label{eq:mult}
\square(f\cdot g)(t)=\square f(t) \cdot
g(t)+f(t)\cdot\square g(t)\,.
\end{equation}
\end{theorem}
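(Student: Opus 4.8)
The plan is to reduce the identity to a computation at the level of the $\epsilon$-scale derivative and then transfer it through the averaging operator $\left<\cdot\right>$. First I would record a product rule for the one-sided quantum derivatives. Since the product $f\cdot g$ is bilinear, the elementary add-and-subtract identity gives
$$\Delta_{\epsilon}^{-}(f\cdot g)(t)=\Delta_{\epsilon}^{-}f(t)\cdot g(t-\epsilon)+f(t)\cdot\Delta_{\epsilon}^{-}g(t),$$
and symmetrically
$$\Delta_{\epsilon}^{+}(f\cdot g)(t)=\Delta_{\epsilon}^{+}f(t)\cdot g(t+\epsilon)+f(t)\cdot\Delta_{\epsilon}^{+}g(t).$$
Substituting these into Definition~\ref{def:qd} and collecting the terms that carry the factor $f(t)$ reproduces exactly $f(t)\cdot\frac{\square_{\epsilon}g}{\square t}(t)$, so the only obstruction to the clean Leibniz form is that the surviving terms carry the shifted values $g(t\pm\epsilon)$ in place of $g(t)$.

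Next I would write each shifted factor as $g(t\pm\epsilon)=g(t)+\bigl[g(t\pm\epsilon)-g(t)\bigr]$. The $g(t)$-pieces reassemble, once more by Definition~\ref{def:qd}, into $\frac{\square_{\epsilon}f}{\square t}(t)\cdot g(t)$, leaving
$$\frac{\square_{\epsilon}(f\cdot g)}{\square t}(t)=\frac{\square_{\epsilon}f}{\square t}(t)\cdot g(t)+f(t)\cdot\frac{\square_{\epsilon}g}{\square t}(t)+R_{\epsilon}(t),$$
where $R_{\epsilon}$ gathers the terms in which a one-sided difference quotient of $f$ multiplies an increment $g(t\pm\epsilon)-g(t)$ of $g$.

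The main obstacle, and the single place where the hypothesis $\alpha+\beta>1$ enters, is showing that $R_{\epsilon}\to 0$ pointwise as $\epsilon\to 0$. Here I would invoke the two H\"{o}lder estimates: $\|f(t\pm\epsilon)-f(t)\|\leqslant c_{f}\epsilon^{\alpha}$ forces $\|\Delta_{\epsilon}^{\pm}f(t)\|\leqslant c_{f}\epsilon^{\alpha-1}$, while $\|g(t\pm\epsilon)-g(t)\|\leqslant c_{g}\epsilon^{\beta}$. Each summand of $R_{\epsilon}$ is then bounded, via the Cauchy--Schwarz estimate for the product, by a constant multiple of $\epsilon^{\alpha-1}\cdot\epsilon^{\beta}=\epsilon^{\alpha+\beta-1}$; since $\alpha+\beta-1>0$ this tends to $0$. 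This quantitative cancellation is exactly what the condition $\alpha+\beta>1$ buys and is the crux of the argument.

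Finally I would apply the operator $\left<\cdot\right>$ to the displayed identity. Using that $\left<\cdot\right>$ is linear and commutes with multiplication by the $\epsilon$-independent factors $g(t)$ and $f(t)$, the first two terms pass, through \eqref{eq:scaleDer}, to $\square f(t)\cdot g(t)$ and $f(t)\cdot\square g(t)$; since $R_{\epsilon}$ converges to $0$ it belongs to the convergent part and contributes $\left<R_{\epsilon}\right>=0$. Combining these gives precisely \eqref{eq:mult}, which completes the proof.
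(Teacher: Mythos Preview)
The paper does not actually prove Theorem~\ref{theo:mult}; it is stated with a citation to \cite{Cresson:2011} and used as a black box, so there is no in-paper argument to compare against. Your proposal reproduces, in outline, the proof given in the cited reference: expand the one-sided difference quotients of the product, isolate a remainder consisting of products $\Delta_{\epsilon}^{\pm}f(t)\cdot\bigl[g(t\pm\epsilon)-g(t)\bigr]$, and use the H\"{o}lder exponents to bound this remainder by $C\epsilon^{\alpha+\beta-1}\to 0$.

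One point deserves more care than you give it. The operator $\left<\cdot\right>$ is defined via a projection $\pi$ onto $C^{0}_{conv}$ along a \emph{chosen} complement $E$, and you tacitly assume that $\pi$ is linear and that multiplication by an $\epsilon$-independent factor such as $g(t)$ sends $E$ to $E$ and $C^{0}_{conv}$ to $C^{0}_{conv}$. The second inclusion is clear, but closure of $E$ under such multiplication is not automatic for an arbitrary algebraic complement; in \cite{Cresson:2011} this is part of the standing conventions on $E$, and you should either invoke that or state it as a hypothesis. With that caveat, the argument is correct and is essentially the one in the source.
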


\begin{remark}
For $f\in \mathcal{C}^1\left(I, \mathbb{R}^{d}\right)$ and $g\in
\mathcal{C}^1\left(I, \mathbb{R}^{d}\right)$, one obtains from
\eqref{eq:mult} the classical Leibniz rule $(f\cdot g)'=f'\cdot
g+f\cdot g'$. For convenience of notation, we sometimes write
\eqref{eq:mult} as $(f\cdot g)^\square(t)= f^\square(t)
\cdot g(t)+f(t)\cdot g^\square(t)$.
\end{remark}

\begin{theorem}[The quantum Barrow rule \cite{Cresson:2011}]
\label{Barrow} Let $f\in \mathcal{C}^0([t_1,t_2],\mathbb{R})$ be
such that $\Box f / \Box t$ is continuous and
\begin{equation}
\label{nec_condition}
\lim_{\epsilon\to0} \int_{t_{1}}^{t_{2}}
\left(\frac{\Box_\epsilon f}{\Box t}\right)_E(t)dt=0.
\end{equation}
Then,
$$
\int^{t_{2}}_{t_{1}} \frac{\Box f}{\Box t}(t)\, dt=f(t_{2})-f(t_{1}).
$$
\end{theorem}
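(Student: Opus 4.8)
The plan is to push everything down to the level of the $\epsilon$-scale derivative, where the computation reduces to a telescoping of integrals of $f$, and then to pass to the limit $\epsilon\to 0$ using the projection $\pi$ together with the vanishing hypothesis \eqref{nec_condition}. Concretely, I would start from the decomposition of $\Box_\epsilon f/\Box t$ into its convergent and complementary parts,
$$
\frac{\Box_\epsilon f}{\Box t}
= \pi\!\left(\frac{\Box_\epsilon f}{\Box t}\right)
+ \left(\frac{\Box_\epsilon f}{\Box t}\right)_E ,
$$
integrate over $[t_1,t_2]$, and rearrange to obtain
$$
\int_{t_1}^{t_2}\pi\!\left(\frac{\Box_\epsilon f}{\Box t}\right)(t)\,dt
= \int_{t_1}^{t_2}\frac{\Box_\epsilon f}{\Box t}(t)\,dt
- \int_{t_1}^{t_2}\left(\frac{\Box_\epsilon f}{\Box t}\right)_E(t)\,dt .
$$
By the definition $\Box f/\Box t=\langle \Box_\epsilon f/\Box t\rangle$, the left-hand side should converge, as $\epsilon\to0$, to $\int_{t_1}^{t_2}\tfrac{\Box f}{\Box t}(t)\,dt$, while the last term on the right vanishes precisely by the hypothesis \eqref{nec_condition}. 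Thus it remains to evaluate $\lim_{\epsilon\to0}\int_{t_1}^{t_2}\tfrac{\Box_\epsilon f}{\Box t}(t)\,dt$.

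For that limit I would compute directly with the left and right quantum derivatives. Writing out $\Delta_\epsilon^{\pm}f$ and performing the obvious change of variables, each integral telescopes:
$$
\int_{t_1}^{t_2}\Delta_\epsilon^{+}f(t)\,dt
= \frac{1}{\epsilon}\int_{t_2}^{t_2+\epsilon}f(s)\,ds
- \frac{1}{\epsilon}\int_{t_1}^{t_1+\epsilon}f(s)\,ds ,
\qquad
\int_{t_1}^{t_2}\Delta_\epsilon^{-}f(t)\,dt
= \frac{1}{\epsilon}\int_{t_2-\epsilon}^{t_2}f(s)\,ds
- \frac{1}{\epsilon}\int_{t_1-\epsilon}^{t_1}f(s)\,ds .
$$
Continuity of $f$ forces each averaged integral to tend to the value of $f$ at the corresponding endpoint, so both expressions tend to $f(t_2)-f(t_1)$. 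Substituting into the definition of the $\epsilon$-scale derivative, the real part $\tfrac12(\Delta_\epsilon^{+}f+\Delta_\epsilon^{-}f)$ contributes $f(t_2)-f(t_1)$ while the imaginary part $\tfrac12(\Delta_\epsilon^{+}f-\Delta_\epsilon^{-}f)$ contributes $0$, giving $\lim_{\epsilon\to0}\int_{t_1}^{t_2}\tfrac{\Box_\epsilon f}{\Box t}(t)\,dt = f(t_2)-f(t_1)$. Combined with the previous paragraph, this yields the claim.

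The step I expect to be the main obstacle is the passage to the limit on the left-hand side, i.e.\ justifying
$$
\lim_{\epsilon\to0}\int_{t_1}^{t_2}\pi\!\left(\frac{\Box_\epsilon f}{\Box t}\right)(t)\,dt
= \int_{t_1}^{t_2}\lim_{\epsilon\to0}\pi\!\left(\frac{\Box_\epsilon f}{\Box t}\right)(t)\,dt ,
$$
that is, the interchange of the integral with the operator $\langle\cdot\rangle$. Pointwise the inner limit exists and equals the continuous function $\Box f/\Box t$, but to move the limit outside the integral I would invoke a dominated-convergence argument on the compact interval $[t_1,t_2]$, using the continuity of $\Box f/\Box t$ to control the convergent part. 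Everything else---the telescoping computation and the cancellation of the imaginary contribution---is routine; the only genuinely delicate points are this limit-integral exchange and the observation that hypothesis \eqref{nec_condition} is exactly what is needed to discard the non-convergent $E$-component.
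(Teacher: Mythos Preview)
The paper does not supply a proof of this theorem: it is stated as a quotation of the quantum Barrow rule from \cite{Cresson:2011}, with no accompanying proof environment. So there is nothing in the present paper to compare your argument against.

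That said, your outline is the standard route to this result and matches the argument in Cresson and Greff. The decomposition into convergent and $E$-parts, the telescoping of $\int\Delta_\epsilon^{\pm}f$, and the use of hypothesis \eqref{nec_condition} to kill the $E$-integral are exactly the intended mechanism. You are also right that the only genuinely delicate step is interchanging $\lim_{\epsilon\to0}$ with $\int_{t_1}^{t_2}$ on the convergent part; in the original reference this is handled under the standing continuity assumption on $\Box f/\Box t$, which ensures the limit is uniform enough on the compact interval to justify the exchange. Your proposal is therefore correct and aligned with the source the paper cites.
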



\section{The Non-Differentiable Embedding with Time Delays}
\label{sec:cNT}

Given two operators $A$ and $B$, we use the notations
$$
(A\cdot B)(y)=A(y) B(y) \quad \text{ and }  (A\circ B)(y)=A(B(y)).
$$

\begin{definition}[The $k$th scale derivative]
Let $k \in \mathbb{N}$. The operator $\square^{k}$
is defined by
\begin{equation}
\label{eq:notat}
\square^{k}
= \frac{\square^{k}}{\square t^{k}}
= \frac{\square}{\square t}\circ \cdots \circ \frac{\square}{\square t},
\end{equation}
where $\frac{\square}{\square t}$ appears exactly $k$ times
on the right-hand side of \eqref{eq:notat}.
\end{definition}

\begin{definition}[The backward shift operator $\rho^\tau$]
Given $\tau > 0$, the backward shift operator $\rho^\tau$ is defined by
$\rho^\tau(t) = t - \tau$.
\end{definition}

\begin{definition}[The operators ${[\cdot]^{\square^k}_{\tau}}$ and ${[\cdot]^{k}_{\tau}}$]
Let $\tau>0$, $k \in \mathbb{N}$. For convenience,
we introduce the operators ${[\cdot]^{\square^k}_{\tau}}$ and ${[\cdot]^{k}_{\tau}}$ by
$$
[y]^{\square^k}_\tau(t)=\left(t,
y(t),\square y(t),\ldots,\square^{k}y(t),(y \circ \rho^\tau)(t),
(\square y\circ \rho^\tau)(t),\ldots, (\square^{k}y\circ \rho^\tau)(t) \right)
$$
and
$$
[y]^{k}_\tau(t)=\left(t,
y(t),y'(t),\ldots,y^{(k)}(t),(y\circ \rho^\tau)(t), (y'\circ \rho^\tau)(t),
\ldots,(y^{(k)}\circ \rho^\tau)(t) \right).
$$
When $k = 1$, we omit $k$ and use
$\dot{y}$ to denote the derivative $y'$, that is:
$$
[y]_{\tau}^{\square}(t)=(t,y(t),\square y(t),y(t-\tau),\square y(t-\tau))
$$
and
$$
[y]_{\tau}(t)=(t,y(t),\dot{y}(t),y(t-\tau),\dot{y}(t-\tau)).
$$
\end{definition}

Given a function $f :\mathbb{R} \times \left(\mathbb{C}^d\right)^{2(k+1)}
\rightarrow \mathbb{C}$, we denote by $F^{k,\tau}$
the corresponding \emph{evaluation operator} defined by
$F^{k,\tau} = f[\cdot]^{k}_\tau$, that is,
\begin{equation*}
F^{k,\tau} :
\begin{array}{lll}
\mathcal{C}^{0}\left(I, \mathbb{C}^{d}\right) & \longrightarrow &
\mathcal{C}^{0}\left(I,
\mathbb{C}\right) \\
y & \longmapsto & t \mapsto f[y]^{k}_\tau(t)\, .
\end{array}
\end{equation*}

Let $\mathbf{f}=\{ f_i \}_{i=0,\ldots ,n}$  and
$\mathbf{g}=\{ g_i \}_{i=0,\ldots ,n}$ be a finite family of
functions $f_i, g_i :\mathbb{R} \times \left(\mathbb{C}^d\right)^{2(k+1)}
\rightarrow \mathbb{C}$, and $F^{k,\tau}_i$ and $G^{k,\tau}_i$, $i=1,\ldots ,n$,
be the corresponding family of evaluation operators. We denote by
$\mathrm{O}^{k,\tau}_{\mathbf{f}, \mathbf{g}}$ the differential operator
\begin{equation}
\label{formop}
\mathrm{O}^{k,\tau}_{\mathbf{f},\mathbf{g}}
=\sum_{i=0}^n F^{k,\tau}_i \cdot  \left({d^i \over dt^i }
\circ G^{k,\tau}_i\right),
\end{equation}
with the convention that $ \left (  {d\over dt} \right )^0$
is the identity mapping on $\mathbb{C}$.
As before, we omit $k$ when $k = 1$:
$\mathrm{O}^{\tau}_{\mathbf{f},\mathbf{g}}
= \mathrm{O}^{1,\tau}_{\mathbf{f},\mathbf{g}}$.

\begin{definition}[Non-differentiable embedding of operators with time delay]
\label{ndeo}
The non-differentiable embedding of \eqref{formop},
denoted by $\mbox{\rm Emb}_{\square}
\left(\mathrm{O}^{k,\tau}_{\mathbf{f},\mathbf{g}}\right)$, is given by
\begin{equation*}
\mbox{\rm Emb}_{\square}\left(\mathrm{O}^{k,\tau}_{\mathbf{f},\mathbf{g}}\right)
= \sum_{i=0}^n F^{\square^k,\tau}_i \cdot  \left({\square^{i} \over \square t^i }
\circ G^{\square^k,\tau}_i\right),
\end{equation*}
$F^{\square^k,\tau}_i = \mbox{\rm Emb}_{\square}(F^{k,\tau}_i)
= f_i[\cdot]^{\square^k}_\tau$,
${\square^{i} \over \square t^i }
= \mbox{\rm Emb}_{\square}\left({d^i \over dt^i }\right)$,
$G^{\square^k,\tau}_i = \mbox{\rm Emb}_{\square}(G^{k,\tau}_i)
= g_i[\cdot]^{\square^k}_\tau$.
\end{definition}


\subsection{Embedding of Variational Problems with Time Delay}
\label{sec:CV}

The fundamental problem of the calculus of variations with time delay
is to minimize
\begin{equation}
\label{P}
I^{\tau}[q] = \int_{t_{1}}^{t_{2}}
L\left(t,q(t),\dot{q}(t),q(t-\tau),\dot{q}(t-\tau)\right) dt
\end{equation}
subject to
$q(t)=\delta(t)$, $t\in[t_{1}-\tau,t_{1}]$, and $q(t_2) = q_2$,
where $t_{1}< t_{2}$ are fixed in $\mathbb{R}$,
$\tau$ is a given positive real number such that $\tau<t_{2}-t_{1}$,
$\delta$ is a given piecewise smooth function, and $q_2 \in \mathbb{R}^d$.
We assume that admissible functions $q$ are such that both
$q$ and $q\circ \rho^\tau$ belong to $\mathcal{C}^{1}\left(I,\mathbb{R}^{d}\right)$.
Note that, with our notations, \eqref{P} can be written as
$$
I^{\tau}[q] = \int_{t_{1}}^{t_{2}} L[q]_\tau(t) dt.
$$

A variation of $q\in \mathcal{C}^{1}\left(I,
\mathbb{R}^{d}\right)$ is another function of $
\mathcal{C}^{1}\left(I, \mathbb{R}^{d}\right)$ of the form
$q + \varepsilon h$ with $\varepsilon$ a small number
and $h \in \mathcal{C}^{1}\left(I,\mathbb{R}^{d}\right)$
such that $h(t)=0$ for $t \in[t_{1}-\tau,t_{1}] \cup  \{t_2\}$.

\begin{definition}[Extremal]
We say that $q$ is an \emph{extremal} of funcional \eqref{P} if
$$
\frac{d}{d\varepsilon}\left.I^{\tau}[y + \varepsilon h]\right|_{\varepsilon = 0}=0
$$
for any $h\in \mathcal{C}^{1}\left(I,\mathbb{R}^{d}\right)$.
\end{definition}

A first idea to obtain a non-differentiable Lagrangian system
with time delays is to embed directly the classical
Euler--Lagrange equations with time delays.

\begin{theorem}[Euler--Lagrange equations with time delay \cite{CD:Agrawal:1997,DH:1968}]
A function $q\in \mathcal{C}^{1}\left(I, \mathbb{R}^{d}\right)$ is
an extremal of \eqref{P} if and only if
\begin{equation}
\label{EL1}
\begin{cases}
\frac{d}{dt}\left[L_{\dot{q}}[q]_{\tau}(t)+
L_{\dot{q}_{\tau}}[q]_{\tau}(t+\tau)\right]\\
\qquad=L_{q}[q]_{\tau}(t)+L_{q_{\tau}}[q]_{\tau}(t+\tau)\,,
\quad t_{1}\leq t\leq t_{2}-\tau,\\  \tag{EL}
\frac{d}{dt}L_{\dot{q}}[q]_{\tau}(t) =L_{q}[q]_{\tau}(t)\,,
\quad t_{2}-\tau\leq t\leq t_{2},
\end{cases}
\end{equation}
where $L_{\xi}(t,q,\dot{q},q_\tau,\dot{q}_\tau)$
denotes the partial derivative of
$L(t,q,\dot{q},q_\tau,\dot{q}_\tau)$ with respect to
$\xi \in \{q,\dot{q},q_\tau,\dot{q}_\tau\}$.
\end{theorem}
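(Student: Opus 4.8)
The plan is to derive the Euler--Lagrange equations with time delay via the classical first-variation argument, carefully handling the delay term through a change of variables. I would start by forming $I^{\tau}[q+\varepsilon h]$, differentiating under the integral sign with respect to $\varepsilon$ at $\varepsilon=0$, and applying the chain rule. This yields
\begin{equation*}
\int_{t_1}^{t_2}\Bigl(L_q[q]_\tau(t)\,h(t)+L_{\dot q}[q]_\tau(t)\,\dot h(t)
+L_{q_\tau}[q]_\tau(t)\,h(t-\tau)+L_{\dot q_\tau}[q]_\tau(t)\,\dot h(t-\tau)\Bigr)dt=0.
\end{equation*}
The admissibility constraints $h(t)=0$ on $[t_1-\tau,t_1]\cup\{t_2\}$ will be what makes the boundary terms vanish later, so I would flag them now.

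Next I would treat the two delayed terms. The key step is the substitution $s=t-\tau$ (equivalently, applying the shift by $\tau$) in the integrals containing $h(t-\tau)$ and $\dot h(t-\tau)$, converting them into integrals of $h(s)$ and $\dot h(s)$ with the Lagrangian evaluated at the advanced argument $[q]_\tau(s+\tau)$. Because $h$ vanishes on $[t_1-\tau,t_1]$, the lower limit of integration after the shift can be raised from $t_1-\tau$ to $t_1$; and since $\tau<t_2-t_1$, the upper limit becomes $t_2-\tau$. Collecting everything over the common variable, the functional derivative splits naturally into a contribution on $[t_1,t_2-\tau]$, where all four terms are present, and a contribution on $[t_2-\tau,t_2]$, where only the non-delayed terms survive. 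This split is precisely the source of the two-branch structure in \eqref{EL1}.

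The remaining step is integration by parts on each $\dot h$ term to factor out $h$. On $[t_1,t_2-\tau]$ the two derivative terms $L_{\dot q}[q]_\tau(t)\dot h(t)$ and $L_{\dot q_\tau}[q]_\tau(t+\tau)\dot h(t)$ combine under a single $\frac{d}{dt}$, producing the bracketed expression $\frac{d}{dt}\bigl[L_{\dot q}[q]_\tau(t)+L_{\dot q_\tau}[q]_\tau(t+\tau)\bigr]$; on $[t_2-\tau,t_2]$ only $L_{\dot q}$ appears. The boundary terms from integration by parts vanish at $t_1$ and $t_2$ because $h$ is zero there, and one must check that the contributions at the interior junction $t=t_2-\tau$ cancel between the two pieces. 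Having reduced the condition to $\int(\text{coefficient})\,h=0$ for arbitrary admissible $h$, I would invoke the fundamental lemma of the calculus of variations on each subinterval to conclude that each coefficient vanishes, giving \eqref{EL1}. The converse direction is immediate by reversing these steps.

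The main obstacle I anticipate is the bookkeeping at the junction $t=t_2-\tau$: one must verify that the boundary term generated by integrating by parts on $[t_1,t_2-\tau]$ matches, with the correct sign, the boundary term on $[t_2-\tau,t_2]$ so that they cancel, and that the shifted integration limits are handled consistently. This is a routine but error-prone continuity-matching computation rather than a conceptual difficulty; the conceptual content lies entirely in the change of variables $s=t-\tau$ and the exploitation of the vanishing of $h$ on $[t_1-\tau,t_1]$.
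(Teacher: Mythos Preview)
The paper does not actually prove this classical theorem; it is quoted from the cited references \cite{CD:Agrawal:1997,DH:1968}. Your proposed argument is correct and is precisely the classical specialization of the paper's own proof of the non-differentiable analogue, Theorem~\ref{ELDT}: there the authors perform the same first-variation computation, the same linear change of variables $t=s+\tau$ exploiting $h\equiv 0$ on $[t_1-\tau,t_1]$, replace ordinary integration by parts by the quantum Leibniz and Barrow rules (Theorems~\ref{theo:mult} and~\ref{Barrow}), display the boundary contributions at $t_1$, $t_2-\tau$, and $t_2$, and then invoke the fundamental lemma on each subinterval---without, incidentally, carrying out the junction bookkeeping you flag, so your concern about the matching at $t_2-\tau$ is handled only implicitly (via variations compactly supported in each open subinterval) in the paper as well.
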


The following theorem gives the non-differentiable embedding
of the Euler--Lagrange equations with time delay \eqref{EL1}.
By $\mathcal{C}^{n}_{\square}\left(I, \mathbb{X}^{d}\right)$
we denote the set of functions $q$ such that both $q$
and $q \circ \rho^\tau$ belong to
$\mathcal{C}^{0}\left(I, \mathbb{X}^{d}\right)$ as well as
$\square^{i}q$ and $(\square^{i}q) \circ \rho^\tau$ for
all $i=1,\ldots,n$.

\begin{theorem}
Let the Lagrangian $L$ be a $\mathcal{C}^{1}_{\square}\left(I, \mathbb{R}^{d}\right)$-function
with respect to all its arguments, holomorphic with respect to
$\dot{q}(t)$ and $\dot{q}(t-\tau)$, and real when
$\dot{q}(t)$ and $\dot{q}(t-\tau)$ belong to $\mathbb{R}^{d}$.
The non-differentiable embedded
\emph{Euler--Lagrange equations with time delay}
associated to $L$ are given by
\begin{equation}
\label{EL2}
\begin{cases}
\frac{\square}{\square t}\left[L_{\square
q}[q]_{\tau}^{\square}(t)+ L_{\square
q_{\tau}}[q]_{\tau}^{\square}(t+\tau)\right]\\
\qquad=L_{q}[q]_{\tau}^{\square}(t)
+L_{q_{\tau}}[q]_{\tau}^{\square}(t+\tau),\,\,\, t_{1}\leq t
\leq t_{2}-\tau,\\ \tag{$\square EL$}
\frac{\square}{\square t} L_{\square q}[q]_{\tau}^{\square}(t)
=L_{q}[q]_{\tau}^{\square}(t),\,\,\, t_{2}-\tau\leq t\leq t_{2}\,.
\end{cases}
\end{equation}
\end{theorem}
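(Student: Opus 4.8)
The plan is to recognize that the classical Euler--Lagrange equations \eqref{EL1} are assembled entirely from differential operators of the form \eqref{formop}, and then to apply the non-differentiable embedding of Definition~\ref{ndeo} termwise to each side of both cases. Since the embedding is defined by fixed substitution rules, the bulk of the argument is to match \eqref{EL1} to the template \eqref{formop} and read off the result.

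First I would cast each side of \eqref{EL1} in the operator form \eqref{formop}. For the delayed case $t_{1}\leq t\leq t_{2}-\tau$, the left-hand side $\frac{d}{dt}\left[L_{\dot q}[q]_\tau(t)+L_{\dot q_\tau}[q]_\tau(t+\tau)\right]$ is the composition of the classical derivative $\frac{d}{dt}$ (the $i=1$ term) with a sum of evaluation operators $G^{1,\tau}_i=g_i[\cdot]_\tau$, taking the coefficient functions $g_i$ among $L_{\dot q}$ and $L_{\dot q_\tau}$ and absorbing the forward shift to $t+\tau$ into composition with a shift; the right-hand side $L_q[q]_\tau(t)+L_{q_\tau}[q]_\tau(t+\tau)$ is a pure evaluation operator, i.e.\ the $i=0$ term with $(d/dt)^0$ the identity. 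The second case $t_{2}-\tau\leq t\leq t_{2}$ is structurally simpler, carrying no forward-shifted contributions: left side $\frac{d}{dt}L_{\dot q}[q]_\tau(t)$, right side $L_q[q]_\tau(t)$.

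Next I would apply $\mbox{\rm Emb}_{\square}$ according to Definition~\ref{ndeo}. By construction this replaces each classical derivative $\frac{d^i}{dt^i}$ by the scale derivative $\frac{\square^i}{\square t^i}$ and each evaluation operator $f[\cdot]^k_\tau$ by $f[\cdot]^{\square^k}_\tau$, which at the level of arguments is exactly the substitution $\dot q\mapsto\square q$ and $\dot q_\tau\mapsto\square q_\tau$, leaving $q$ and $q_\tau$ unchanged. Reading off the two cases then yields precisely \eqref{EL2}: $L_{\dot q}[q]_\tau\mapsto L_{\square q}[q]^{\square}_\tau$, $L_{\dot q_\tau}[q]_\tau\mapsto L_{\square q_\tau}[q]^{\square}_\tau$, and $\frac{d}{dt}\mapsto\frac{\square}{\square t}$, with the forward shifts to $t+\tau$ preserved since composition with a shift commutes with the embedding.

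The delicate point--and where the hypotheses on $L$ enter--is well-definedness. The scale derivative $\square q$ is in general complex-valued, carrying the imaginary unit from Definition~\ref{def:qd}, so the substitution $\dot q\mapsto\square q$ forces evaluation of $L$ and its velocity partials on \emph{complex} velocity arguments. Holomorphy of $L$ with respect to $\dot q(t)$ and $\dot q(t-\tau)$ is exactly what makes this evaluation, and the partials $L_{\square q}$ and $L_{\square q_\tau}$, meaningful; the $\mathcal{C}^{1}_{\square}$ regularity guarantees that the scale derivatives appearing in \eqref{EL2} exist; and the assumption that $L$ is real when the velocities are real ensures consistency, namely that the embedded system reduces to the classical \eqref{EL1} in the differentiable limit. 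Once well-definedness is secured, the identification of terms is immediate, so this analytic check--rather than any computation--is the main obstacle.
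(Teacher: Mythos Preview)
Your proposal is correct and follows essentially the same approach as the paper: rewrite the classical Euler--Lagrange equations \eqref{EL1} in the operator form \eqref{formop} and then apply Definition~\ref{ndeo} termwise. The paper's proof is slightly more explicit in that it writes \eqref{EL1} as a single equation $\mathrm{O}^{\tau}_{\mathbf{f},\mathbf{g}}(q)(t)=0$ with specific piecewise-defined families $\mathbf{f}=(f_0,f_1)$ and $\mathbf{g}=(g_0,g_1)$, but the substance is identical, and your additional discussion of why the holomorphy and regularity hypotheses are needed for well-definedness is a welcome elaboration that the paper leaves implicit.
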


\begin{proof}
The Euler--Lagrange equations \eqref{EL1} can be written in the
equivalent form
$$
\mathrm{O}^{\tau}_{\mathbf{f},\mathbf{g}}(q)(t)=0,
\quad t\in [t_{1}, t_{2}],
$$
with $\mathbf{f}$ and $\mathbf{g}$ given by
\begin{equation*}
\mathbf{f}[q]_{\tau}(t)
=
\begin{cases}
\left(-L_{q}[q]_{\tau}(t)-L_{q_{\tau}}[q]_{\tau}(t+\tau), 1\right)
& \text{ if } t\in [t_{1},t_{2}-\tau]\\
\left(-L_{q}[q]_{\tau}(t), 1\right) & \text{ if } t\in [t_{2}-\tau, t_{2}]
\end{cases}
\end{equation*}
and
\begin{equation*}
\mathbf{g}[q]_{\tau}(t) =
\begin{cases}
\left(1,L_{\dot{q}}[q]_{\tau}(t)+
L_{\dot{q_{\tau}}}[q]_{\tau}(t+\tau) \right)
& \text{ if } t\in [t_{1},t_{2}-\tau]\\
\left(1,L_{\dot{q}}[q]_{\tau}(t) \right)
& \text{ if } t\in [t_{2}-\tau, t_{2}].
\end{cases}
\end{equation*}
The intended conclusion follows now
by a direct application of Definition~\ref{ndeo}.
\end{proof}

\begin{remark}
The Euler--Lagrange equations \eqref{EL2} reduce to \eqref{EL1}
when the functions are differentiable.
\end{remark}

Another approach to obtain a non-differentiable Lagrangian system
with time delays is to embed the Lagrangian,
and then to apply the least-action principle.
The non-differentiable embedding of functional \eqref{P}
is given by
\begin{equation}
\label{P1}
I_{\square}^{\tau}[q] = \int_{t_{1}}^{t_{2}}
L\left(t,q(t),\square q(t),q(t-\tau),\square q(t-\tau)\right) dt
= \int_{t_{1}}^{t_{2}} L[y]_{\tau}^{\square}(t) dt.
\end{equation}
In contrast with the original problem, the admissible functions
$q$ are now not necessarily differentiable: admissible functions
$q$ for \eqref{P1} are those such that
$q \in \mathcal{C}^{1}_{\square}\left(I,\mathbb{R}^{d}\right)$.

Let $\alpha,\beta,\varepsilon \in \mathbb{R}$ be such that
$0<\alpha,\beta<1$, $\alpha+\beta>1$ and $|\varepsilon| \ll 1$.
A variation of $q\in H^{\alpha}\left(I, \mathbb{R}^{d}\right)$ is
another function of $ H^{\alpha}\left(I, \mathbb{R}^{d}\right)$ of
the form $q+ \varepsilon h$, with $h \in H^{\beta}\left(I,
\mathbb{R}^{d}\right)$, such that $h(t)=0$ for
$t \in[t_{1}-\tau,t_{1}] \cup \{t_2\}$.

\begin{definition}[Non-differentiable extremal]
\label{nde}
We say that $q$ is a \emph{non-differentiable extremal}
of funcional \eqref{P1} if
\begin{equation}
\label{diff}
\frac{d}{d\varepsilon}\left.I_{\square}^{\tau}[y + \varepsilon
h]\right|_{\varepsilon = 0}=0
\end{equation}
for any $h\in H^{\beta}\left(I,\mathbb{R}^{d}\right)$.
\end{definition}

As in the classical case, the least-action principle has here
its own meaning, i.e., we seek the non-differentiable extremals
of  funcional \eqref{P1} to determine the dynamics
of a non-differentiable dynamical system.
The next theorem gives the Euler--Lagrange
equations with time delay obtained from
the non-differentiable least-action principle.

\begin{theorem}
\label{ELDT}
Let $0<\alpha,\beta<1$ with $\alpha+\beta>1$.
If $q\in H^{\alpha}\left(I,\mathbb{R}^{d}\right)$
satisfies $\square q\in H^{\alpha}\left(I,
\mathbb{R}^{d}\right)$ and
$\left(L_{\square q}[q]_{\tau}^{\square}(t)+L_{\square
q_{\tau}}[q]_{\tau}^{\square}(t)\right)\cdot h(t)$
satisfies condition \eqref{nec_condition} for all $h\in
H^{\beta}\left(I, \mathbb{R}^{d}\right)$, then function $q$ is a
non-differentiable extremal of $I_{\square}^{\tau}$ if and only if
\begin{equation}
\label{EL3}
\begin{cases}
\frac{\square}{\square t}\left(L_{\square
q}[q]_{\tau}^{\square}(t)+ L_{\square
q_{\tau}}[q]_{\tau}^{\square}(t+\tau)\right)\\
\qquad\quad
= L_{q}[q]_{\tau}^{\square}(t)+L_{q_{\tau}}[q]_{\tau}^{\square}(t+\tau),
\quad t_{1}\leq t\leq t_{2}-\tau,\\ \tag{$EL_{\square LAP}$}
\frac{\square}{\square t} L_{\square q}[q]_{\tau}^{\square}(t)
=L_{q}[q]_{\tau}^{\square}(t),\quad t_{2}-\tau\leq t\leq t_{2}.
\end{cases}
\end{equation}
\end{theorem}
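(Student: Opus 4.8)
The plan is to lift the classical variational argument that produces the delayed Euler--Lagrange system to the non-differentiable setting, replacing the ordinary derivative by the scale derivative $\square$ and the classical integration by parts by the quantum Leibniz rule (Theorem~\ref{theo:mult}) and the quantum Barrow rule (Theorem~\ref{Barrow}). First I would compute the first variation. Writing $q_\varepsilon=q+\varepsilon h$ and using the linearity of the scale operator, $\square q_\varepsilon=\square q+\varepsilon\,\square h$, so that each slot of $[q_\varepsilon]_{\tau}^{\square}(t)$ is affine in $\varepsilon$. Since $L$ is $\mathcal{C}^1$ in all arguments and holomorphic in the (possibly complex) velocity slots $\dot q(t)$ and $\dot q(t-\tau)$, I can differentiate under the integral sign and apply the chain rule at $\varepsilon=0$; holomorphicity is exactly what legitimises differentiating through the complex quantities $\square h(t)$ and $(\square h)(t-\tau)$. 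This gives
\[
\frac{d}{d\varepsilon}I_{\square}^{\tau}[q+\varepsilon h]\Big|_{\varepsilon=0}
=\int_{t_1}^{t_2}\Big(L_{q}[q]_{\tau}^{\square}\cdot h
+L_{\square q}[q]_{\tau}^{\square}\cdot\square h
+L_{q_{\tau}}[q]_{\tau}^{\square}\cdot(h\circ\rho^\tau)
+L_{\square q_{\tau}}[q]_{\tau}^{\square}\cdot(\square h\circ\rho^\tau)\Big)\,dt .
\]

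Next I would eliminate the delays. Substituting $s=t-\tau$ in the two delayed integrals and using $h\equiv 0$ (hence $\square h\equiv 0$) on $[t_1-\tau,t_1]$ to reset the lower limit back to $t_1$, the delayed terms become $L_{q_{\tau}}[q]_{\tau}^{\square}(t+\tau)\cdot h$ and $L_{\square q_{\tau}}[q]_{\tau}^{\square}(t+\tau)\cdot\square h$ integrated over $[t_1,t_2-\tau]$ only. Splitting $[t_1,t_2]=[t_1,t_2-\tau]\cup[t_2-\tau,t_2]$ then groups the coefficients of $h$ and of $\square h$ precisely into the combinations occurring in \eqref{EL3}. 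On each subinterval the $\square h$ contribution has the form $A\cdot\square h$ with $A$ continuous, and here the hypotheses $q,\square q\in H^{\alpha}$ together with $h\in H^{\beta}$ and $\alpha+\beta>1$ let me invoke the quantum Leibniz rule to write $A\cdot\square h=\square(A\cdot h)-\square A\cdot h$; the standing assumption that $\bigl(L_{\square q}[q]_{\tau}^{\square}+L_{\square q_{\tau}}[q]_{\tau}^{\square}\bigr)\cdot h$ satisfies condition~\eqref{nec_condition} then lets me integrate the total scale derivative by the quantum Barrow rule. The endpoint contributions at $t_1$ and $t_2$ vanish because $h(t_1)=h(t_2)=0$.

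Finally I would appeal to the fundamental lemma of the calculus of variations, applied separately on each open subinterval. Choosing variations $h$ supported in $(t_1,t_2-\tau)$ forces the bracketed coefficient there to vanish, yielding the first equation of \eqref{EL3}; choosing $h$ supported in $(t_2-\tau,t_2)$ yields the second. Since smooth bump functions belong to $H^{\beta}$ for every $\beta\in(0,1)$, the usual continuous-coefficient version of the lemma suffices with no modification, and the converse follows by reading the same chain of identities backwards on each piece. I expect the main obstacle to be the integration-by-parts step rather than the bookkeeping: one must verify that the Hölder regularity genuinely licenses the Leibniz rule on the products $L_{\square q}[q]_{\tau}^{\square}\cdot h$ and their shifted analogues, and that the residual (non-convergent) part of the scale quotient integrates to zero so that Barrow's rule is actually available---this is precisely the role of hypothesis~\eqref{nec_condition}, and it is what distinguishes the non-differentiable argument from its smooth template. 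A secondary point demanding care is the interface $t=t_2-\tau$, where the boundary terms produced by the two subintegrals must be reconciled; localizing the variations to each open subinterval is what cleanly delivers the two equations of \eqref{EL3} without residual interface contributions.
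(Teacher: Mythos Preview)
Your proposal is correct and follows essentially the same route as the paper: compute the first variation, shift the delayed terms by $t\mapsto t+\tau$ using $h\equiv 0$ on $[t_1-\tau,t_1]$, split $[t_1,t_2]$ at $t_2-\tau$, apply the quantum Leibniz rule (Theorem~\ref{theo:mult}) and the quantum Barrow rule (Theorem~\ref{Barrow}) under hypothesis~\eqref{nec_condition}, and conclude by the fundamental lemma. Your treatment of the interface at $t_2-\tau$ via variations supported in each open subinterval is slightly more explicit than the paper's, but the argument is the same.
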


\begin{proof}
Condition \eqref{diff} gives
\begin{multline}
\label{II}
\int_{t_{1}}^{t_{2}} \left(L_{q}[q]_{\tau}^{\square}(t)\cdot h(t)
+ L_{\square q}[q]_{\tau}^{\square}(t)\cdot \square h(t)\right)dt\\
+\int_{t_{1}}^{t_{2}}
\left(L_{q_{\tau}}[q]_{\tau}^{\square}(t)\cdot
h(t-\tau)+L_{\square q_{\tau}}[q]_{\tau}^{\square}(t)\cdot \square
h(t-\tau)\right) dt=0\,.
\end{multline}
By the linear change of variables $t=s+\tau$ in the last integral
of \eqref{II}, and having in mind the fact that $h(t)=0$ on
$[t_1-\tau,t_1]$, equation \eqref{II} becomes
\begin{multline}
\label{I}
\int_{t_{1}}^{t_{2}-\tau}\left[\left(L_{q}[q]_{\tau}^{\square}(t)
+L_{q_{\tau}}[q]_{\tau}^{\square}(t+\tau)\right)\cdot h(t)\right. \\
\left. +\left(L_{\square q}[q]_{\tau}^{\square}(t)+L_{\square
q_{\tau}}[q]_{\tau}^{\square}(t+\tau)\right)\cdot \square h(t)\right]dt\\
+\int_{t_{2}-\tau}^{t_{2}}\left(L_{q}[q]_{\tau}^{\square}(t)\cdot
h(t)+L_{\square q}[q]_{\tau}^{\square}(t)\cdot \square
h(t)\right)dt=0\,.
\end{multline}
Using the hypotheses of the theorem, we obtain
from Theorem~\ref{theo:mult} that
\begin{multline}
\label{III}
\int_{t_{1}}^{t_{2}-\tau}\left(L_{\square
q}[q]_{\tau}^{\square}(t)+L_{\square
q_{\tau}}[q]_{\tau}^{\square}(t+\tau)\right)\cdot \square
h(t)dt\\=\int_{t_{1}}^{t_{2}-\tau}\frac{\square}{\square
t}\left\{\left(L_{\square q}[q]_{\tau}^{\square}(t)+L_{\square
q_{\tau}}[q]_{\tau}^{\square}(t+\tau)\right)\cdot h(t)\right\}dt\\
-\int_{t_{1}}^{t_{2}-\tau}\frac{\square}{\square
t}\left(L_{\square q}[q]_{\tau}^{\square}(t)+L_{\square
q_{\tau}}[q]_{\tau}^{\square}(t+\tau)\right)\cdot h(t)dt
\end{multline}
and
\begin{multline}
\label{IIII}
\int_{t_{2}-\tau}^{t_{2}}L_{\square q}[q]_{\tau}^{\square}(t)\cdot
\square h(t) dt\\=\int_{t_{2}-\tau}^{t_{2}}\frac{\square}{\square
t}\left(L_{\square q}[q]_{\tau}^{\square}(t)\cdot
h(t)\right)dt-\int_{t_{2}-\tau}^{t_{2}}\frac{\square}{\square
t}\left(L_{\square q}[q]_{\tau}^{\square}(t)\right)\cdot h(t)dt\,.
\end{multline}
Because $\left(L_{\square q}[q]_{\tau}^{\square}(t)+L_{\square
q_{\tau}}[q]_{\tau}^{\square}(t+\tau)\right)\cdot h(t)$
satisfies \eqref{nec_condition} for all $h\in H^{\beta}\left(I,
\mathbb{R}^{d}\right)$, using Theorem~\ref{Barrow} and replacing the quantities of
\eqref{III} and \eqref{IIII} into \eqref{I}, we obtain
\begin{equation*}
\begin{split}
\int_{t_{1}}^{t_{2}-\tau}&\left[L_{q}[q]_{\tau}^{\square}(t)
+L_{q_{\tau}}[q]_{\tau}^{\square}(t+\tau)
-\frac{\square}{\square t}\left(L_{\square q}[q]_{\tau}^{\square}(t)
+ L_{\square q_{\tau}}[q]_{\tau}^{\square}(t+\tau)\right)\right]\cdot h(t)dt\\
& +\left.\left(L_{\square q}[q]_{\tau}^{\square}(t)+L_{\square q_{\tau}}
[q]_{\tau}^{\square}(t+\tau)\right)\cdot h(t)\right|^{t_2-\tau}_{t_1}\\
&+ \int_{t_{2}-\tau}^{t_{2}}\left[L_{q}[q]_{\tau}^{\square}(t)
-\frac{\square}{\square t}\left(L_{\square q}[q]_{\tau}^{\square}(t)\right)\right]
\cdot h(t)dt+\left.L_{\square q}[q]_{\tau}^{\square}(t)\cdot h(t)\right|^{t_2}_{t_2-\tau}=0\,.
\end{split}
\end{equation*}
The Euler--Lagrange equations with time delay \eqref{EL3} are obtained using
the fundamental lemma of the calculus of variations (see, e.g., \cite{MR0160139}).
\end{proof}

To summarize, the dynamics of a non-differentiable Lagrangian
system with time delay are determined by the Euler--Lagrange equations
\eqref{EL3}, via the $\square$-least-action principle, respecting the
principle of coherence: \eqref{EL3} coincide with \eqref{EL2}.


\subsection{Embedding of Optimal Control Problems with Time Delay}
\label{sec:OP}

In Section~\ref{sec:CV} we studied the non-differentiable
variational embedding in presence of time delays.
Now, we give a scale Hamiltonian embedding for more general scale problems
of optimal control with delay time. Following \cite{DH:1968,MyID:231},
we define the optimal control problem with time delay as follows:
\begin{gather}
\label{Pond1}
I^{\tau}[q,u] = \int_{t_1}^{t_2}
L\left(t,q(t),q(t-\tau),u(t),u(t-\tau)\right) dt \longrightarrow \min \, , \\
\dot{q}(t)=\varphi\left(t,q(t),q(t-\tau),u(t),u(t-\tau)\right) \label{ci}\, ,
\end{gather}
under given boundary conditions
\begin{equation}
\label{ic}
q(t)=\delta(t), \quad t\in[t_{1}-\tau,t_{1}]\,,
\quad q(t_2) = q_2,
\end{equation}
where $q \in \mathcal{C}^{1}\left(I, \mathbb{R}^{d}\right)$,
$u \in \mathcal{C}^{0}\left(I, \mathbb{R}^{d}\right)$,
the Lagrangian $L:I \times \mathbb{R}^{2d}
\times \mathbb{R}^{2m} \rightarrow \mathbb{R}$
and the velocity vector $\varphi : I \times \mathbb{R}^{2d}
\times \mathbb{R}^{2m} \rightarrow \mathbb{R}^d$
are assumed to be $C^{1}$-functions
with respect to all its arguments. Similarly as before,
we assume that $\delta$ is a given piecewise smooth function
and $q_2$ a given vector in $\mathbb{R}^d$.

\begin{definition}
We introduce the operators $[\cdot,\cdot]_{\tau}$,
$[\cdot,\cdot,\cdot]_{\tau}$, $[\cdot,\cdot]_{\tau}^{\square}$
and $[\cdot,\cdot]_{\tau}^{\square}$ by:
\begin{enumerate}
\item $[q,u]_{\tau}(t)=(t,q(t),q(t-\tau),u(t),u(t-\tau))$,
where $q \in \mathcal{C}^{1}\left(I, \mathbb{R}^{d}\right)$
and $u \in \mathcal{C}^{0}\left(I,\mathbb{R}^{d}\right)$;
\item $[q,u,p]_{\tau}(t)=(t,q(t),q(t-\tau),u(t),u(t-\tau),p(t))$,
where $q, p \in \mathcal{C}^{1}\left(I, \mathbb{R}^{d}\right)$
and $u \in \mathcal{C}^{0}\left(I,\mathbb{R}^{d}\right)$;
\item $[q,u]_{\tau}^{\square}(t)=(t,q(t),q(t-\tau),u(t),u(t-\tau))$,
where $q \in H^{\alpha}\left(I, \mathbb{R}^{d}\right)$ and
$u \in H^{\alpha}\left(I, \mathbb{C}^{d}\right)$;
\item $[q,u,p]_{\tau}^{\square}(t)=(t,q(t),q(t-\tau),u(t),u(t-\tau),p(t))$,
where $q \in H^{\alpha}\left(I, \mathbb{R}^{d}\right)$ and
$u, p \in H^{\alpha}\left(I, \mathbb{C}^{d}\right)$.
\end{enumerate}
\end{definition}

\begin{theorem}[\cite{DH:1968,MyID:231}]
\label{theo:pmpnd1}
If $(q,u)$ is a minimizer to
problem \eqref{Pond1}--\eqref{ic}, then there exists a
co-vector function $p \in \mathcal{C}^{1}\left(I,
\mathbb{R}^{d}\right)$ such that the following conditions hold:
\begin{itemize}
\item \emph{the Hamiltonian systems}
\begin{equation}
\label{eq:Hamnd1}
\begin{cases}
\dot{q}(t) = H_{p}[q,u,p]_{\tau}(t) \, , \\
\dot{p}(t)
= -H_{q}[q,u,p]_{\tau}(t)-H_{q_{\tau}}[q,u,p]_{\tau}(t+\tau)\,,
\end{cases}
\end{equation}
for $t_{1}\leq t\leq t_{2}-\tau$, and
\begin{equation}
\label{eq:Hamnd2}
\begin{cases}
\dot{q}(t) = H_{p}[q,u,p]_{\tau}(t) \, , \\
\dot{p}(t) = -H_{q}[q,u,p]_{\tau}(t) \, ,
\end{cases}
\end{equation}
for $t_{2}-\tau\leq t\leq t_{2}$;

\item \emph{the stationary conditions}
\begin{equation}
\label{eq:CE1}
H_{u}[q,u,p]_{\tau}(t)+H_{u_{\tau}}[q,u,p]_{\tau}(t+\tau)= 0\,,
\end{equation}
for $t_{1}\leq t\leq t_{2}-\tau$, and
\begin{equation}
\label{eq:CE12}
H_{u}[q,u,p]_{\tau}(t)= 0,
\end{equation}
for $t_{2}-\tau\leq t\leq t_{2}$;
\end{itemize}
where the Hamiltonian $H$ is defined by
$H[q,u,p]_{\tau}(t)= L[q,u]_{\tau}(t)+p(t) \cdot \varphi[q,u]_{\tau}(t)$.
\end{theorem}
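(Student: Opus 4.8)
The plan is to recover \eqref{eq:Hamnd1}--\eqref{eq:CE12} as the vanishing of the first variation of an augmented functional in which the dynamic constraint \eqref{ci} is adjoined to the cost \eqref{Pond1} by a co-vector multiplier $p$. Concretely, I would work with
\begin{equation*}
J^{\tau}[q,u,p] = \int_{t_1}^{t_2}\left[H[q,u,p]_{\tau}(t) - p(t)\cdot\dot{q}(t)\right]dt,
\end{equation*}
which agrees with $I^{\tau}[q,u]$ along any admissible pair, since the bracket equals $L[q,u]_{\tau}(t)+p(t)\cdot\bigl(\varphi[q,u]_{\tau}(t)-\dot{q}(t)\bigr)$ and the last parenthesis vanishes on trajectories of \eqref{ci}. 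Varying $p$ freely returns the constraint in the form $\dot{q}=\varphi=H_{p}$, which is the first line of \eqref{eq:Hamnd1} and \eqref{eq:Hamnd2}; the content of the theorem is then the identification of the remaining conditions from variations in $q$ and $u$.

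Next I would take variations $q\mapsto q+\varepsilon h$ and $u\mapsto u+\varepsilon w$, with $h\in\mathcal{C}^{1}\left(I,\mathbb{R}^{d}\right)$ vanishing on $[t_1-\tau,t_1]\cup\{t_2\}$ and $w\in\mathcal{C}^{0}\left(I,\mathbb{R}^{d}\right)$ arbitrary, and impose $\frac{d}{d\varepsilon}J^{\tau}[q+\varepsilon h,u+\varepsilon w,p]\big|_{\varepsilon=0}=0$. Differentiating under the integral sign produces terms proportional to $h(t)$, to $h(t-\tau)$, to $w(t)$ and to $w(t-\tau)$, together with the single term $-p(t)\cdot\dot{h}(t)$ coming from $\dot{q}$. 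The two manipulations that do the real work are exactly those used in the proof of Theorem~\ref{ELDT}: first, shifting the delayed terms by the linear change of variable $t=s+\tau$ in every integral containing $h(t-\tau)$ or $w(t-\tau)$, which—using that $h$ and $w$ vanish on $[t_1-\tau,t_1]$—turns $\int_{t_1}^{t_2}H_{q_{\tau}}[q,u,p]_{\tau}(t)\cdot h(t-\tau)\,dt$ into $\int_{t_1}^{t_2-\tau}H_{q_{\tau}}[q,u,p]_{\tau}(t+\tau)\cdot h(t)\,dt$, and likewise for the $u_{\tau}$ term; second, integrating by parts $-\int_{t_1}^{t_2}p(t)\cdot\dot{h}(t)\,dt$, whose boundary contribution $-[p\cdot h]_{t_1}^{t_2}$ vanishes because $h(t_1)=h(t_2)=0$.

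After these two steps the shifted $(t+\tau)$-contributions survive only over $[t_1,t_2-\tau]$, so the integrals split naturally into a piece over $[t_1,t_2-\tau]$ carrying the extra delayed terms and a piece over $[t_2-\tau,t_2]$ without them. Collecting the coefficient of $h$ and applying the fundamental lemma of the calculus of variations (see \cite{MR0160139}) on each subinterval separately yields $\dot{p}=-H_{q}-H_{q_{\tau}}(t+\tau)$ on $[t_1,t_2-\tau]$ and $\dot{p}=-H_{q}$ on $[t_2-\tau,t_2]$, i.e. the adjoint equations of \eqref{eq:Hamnd1} and \eqref{eq:Hamnd2}; collecting the coefficient of $w$ gives the stationary conditions \eqref{eq:CE1} and \eqref{eq:CE12}, which require no integration by parts since $u$ enters without a derivative. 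I expect the main obstacle to be the bookkeeping at the interface $t=t_2-\tau$: one must check that the splitting induced by the change of variable is consistent, that the delayed terms simply switch off at the interface because $t+\tau$ leaves $[t_1,t_2]$ there (so no spurious boundary term is generated), and that the assumed regularity $p\in\mathcal{C}^{1}\left(I,\mathbb{R}^{d}\right)$ is precisely what makes both the integration by parts and the fundamental lemma legitimate on each piece.
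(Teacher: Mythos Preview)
The paper does not prove this theorem; it is quoted as a known result from \cite{DH:1968,MyID:231}. Your argument is correct and is, in spirit, exactly the route the paper takes when proving the non-differentiable analogue, Theorem~\ref{th:pmp}: there the authors form the same augmented functional
\[
J^{\tau}_{\square}[q,u,p]=\int_{t_1}^{t_2}\bigl[H[q,u,p]_{\tau}^{\square}(t)-p(t)\cdot\square q(t)\bigr]\,dt
\]
and, rather than computing the first variation from scratch as you do, simply apply the already-proved Euler--Lagrange equations with delay (Theorem~\ref{ELDT}) to the augmented Lagrangian $\mathbb{L}=H-p\cdot\square q$, reading off the three equations in $q$, $u$, $p$. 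Your shift $t\mapsto t+\tau$ and integration by parts are precisely what went into the proof of Theorem~\ref{ELDT}, so the two arguments coincide once one recognises the augmented problem as a free Lagrangian problem in $(q,u,p)$.

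One small inconsistency worth cleaning up: you introduce $w$ as ``arbitrary'' but then invoke that $w$ vanishes on $[t_1-\tau,t_1]$ when shifting the $u_\tau$ term. For the delayed control $u(t-\tau)$ to be defined on $[t_1,t_1+\tau]$ the problem implicitly fixes $u$ on $[t_1-\tau,t_1]$, and the admissible variations of $u$ must respect that; stating this at the outset removes the apparent contradiction.
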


\begin{lemma}
Let $H[q,u,p]_{\tau}^{\square}(t)= L[q,u]_{\tau}^{\square}(t)+p(t)
\cdot \varphi[q,u]_{\tau}^{\square}(t)$.
The embedding of the Hamiltonian systems \eqref{eq:Hamnd1}
and \eqref{eq:Hamnd2} are given, respectively, by
\begin{equation}
\label{eq:Hamnd3}
\begin{cases}
\square q(t)=H_{p}[q,u,p]_{\tau}^{\square}(t) \, , \\
\square p(t)=-H_{q}[q,u,p]_{\tau}^{\square}(t)
-H_{q_{\tau}}[q,u,p]_{\tau}^{\square}(t+\tau),
\end{cases}
\end{equation}
for $t_{1}\leq t\leq t_{2}-\tau$, and by
\begin{equation}
\label{eq:Hamnd4}
\begin{cases}
\square q(t)= H_{p}[q,u,p]_{\tau}^{\square}(t) \, , \\
\square p(t) = -H_{q}[q,u,p]_{\tau}^{\square}(t) \, ,
\end{cases}
\end{equation}
for $t_{2}-\tau\leq t\leq t_{2}$; the embedding of the stationary conditions
\eqref{eq:CE1} and \eqref{eq:CE12} are given, respectively, by
\begin{equation}
\label{eq:CE2}
H_{u}[q,u,p]_{\tau}^{\square}(t)
+H_{u_{\tau}}[q,u,p]_{\tau}^{\square}(t+\tau)= 0,
\end{equation}
for $t_{1}\leq t\leq t_{2}-\tau$, and by
\begin{equation}
\label{eq:CE22}
H_{u}[q,u,p]_{\tau}^{\square}(t)= 0\, ,
\end{equation}
for $t_{2}-\tau\leq t\leq t_{2}$.
\end{lemma}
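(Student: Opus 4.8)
The plan is to mirror the proof of the embedded Euler--Lagrange equations \eqref{EL2}: write each relation of Theorem~\ref{theo:pmpnd1} in the operator form $\mathrm{O}^{\tau}_{\mathbf{f},\mathbf{g}}(\cdot)(t)=0$ of \eqref{formop}, and then invoke Definition~\ref{ndeo} term by term. The only conceptual novelty relative to the variational case is that the evaluation operators now act on the triple $(q,u,p)$ rather than on a single curve, so the bookkeeping must record which component each $\frac{d}{dt}$ differentiates. Throughout I use the Hamiltonian $H[q,u,p]_{\tau}^{\square}(t)=L[q,u]_{\tau}^{\square}(t)+p(t)\cdot\varphi[q,u]_{\tau}^{\square}(t)$ fixed in the statement, which is the componentwise embedding of $H=L+p\cdot\varphi$.

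I would begin with the Hamiltonian system \eqref{eq:Hamnd1} on $[t_1,t_2-\tau]$. Rewriting its first line as $\dot q(t)-H_p[q,u,p]_\tau(t)=0$, I recognise an operator of the form \eqref{formop} with an $i=1$ summand whose evaluation factor is the constant $1$ and whose differential factor is $\frac{d}{dt}$ acting on the $q$-component, together with an $i=0$ summand whose evaluation factor is $-H_p[\cdot]_\tau$ composed with the identity. Applying Definition~\ref{ndeo} replaces $\frac{d}{dt}$ by $\frac{\square}{\square t}$ and the bracket $[\cdot]_\tau$ by $[\cdot]_\tau^\square$, giving exactly the first line of \eqref{eq:Hamnd3}. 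The second line is treated the same way: following the device already used for $\mathbf{f}$ in the proof of \eqref{EL2}, I absorb the advanced term $H_{q_\tau}[\cdot]_\tau(t+\tau)$ into the $i=0$ evaluation operator, so that the embedding automatically carries the shift through and produces $\square p(t)=-H_q[q,u,p]_\tau^\square(t)-H_{q_\tau}[q,u,p]_\tau^\square(t+\tau)$. Dropping the advanced term on $[t_2-\tau,t_2]$ yields \eqref{eq:Hamnd4} from \eqref{eq:Hamnd2} in the same manner.

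The stationary conditions \eqref{eq:CE1} and \eqref{eq:CE12} are even simpler: they contain no time derivative, so in \eqref{formop} only the $i=0$ summand survives and the embedding acts solely on the evaluation operators, sending $[\cdot]_\tau$ to $[\cdot]_\tau^\square$. This gives \eqref{eq:CE2} and \eqref{eq:CE22} at once, where again the advanced term $H_{u_\tau}[\cdot]_\tau(t+\tau)$ is part of the evaluation operator and is transported unchanged by the embedding.

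I expect no serious obstacle: once each Pontryagin relation is cast in the form \eqref{formop}, the result is an immediate, term-by-term application of Definition~\ref{ndeo}, exactly as in the embedded Euler--Lagrange case. The one point deserving attention is the consistent placement of the forward shifts $t\mapsto t+\tau$ inside the evaluation operators on $[t_1,t_2-\tau]$, so that the embedding preserves the delayed structure rather than acting on the smoothing parameter $\epsilon$; keeping these shifts inside $f_i,g_i$ (as in the definition of $\mathbf{f}$ for \eqref{EL2}) is what guarantees that $\mathrm{Emb}_\square$ sends $[\cdot]_\tau(t+\tau)$ to $[\cdot]_\tau^\square(t+\tau)$ without further computation.
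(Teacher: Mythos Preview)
Your approach is correct and is precisely the argument the paper has in mind. In fact the paper states this lemma without proof, treating it as an immediate consequence of Definition~\ref{ndeo}; your write-up simply makes explicit, for the Hamiltonian relations, the same operator rewriting $\mathrm{O}^{\tau}_{\mathbf{f},\mathbf{g}}(\cdot)(t)=0$ and term-by-term application of $\mathrm{Emb}_\square$ that the paper carries out for \eqref{EL1}~$\Rightarrow$~\eqref{EL2}. The only point you flag --- keeping the forward shift $t\mapsto t+\tau$ inside the evaluation operators so that the embedding sends $[\cdot]_\tau(t+\tau)$ to $[\cdot]_\tau^\square(t+\tau)$ --- is handled exactly as in the paper's choice of $\mathbf{f}$ in the proof of \eqref{EL2}, so there is no gap.
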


\begin{definition}
We call systems \eqref{eq:Hamnd3} and \eqref{eq:Hamnd4}
\emph{the scale Hamiltonian systems with delay},
while to \eqref{eq:CE2} and \eqref{eq:CE22} we
call \emph{stationary conditions with delay}.
\end{definition}

\begin{lemma}
The embedding of \eqref{Pond1}--\eqref{ci} is given by
\begin{gather}
\label{Pond11}
I^{\tau}_{\square}[q, u] = \int_{t_1}^{t_2}
L\left(t,q(t),q(t-\tau),u(t),u(t-\tau)\right) dt \longrightarrow \min \, , \\
\square q(t)=\varphi\left(t,q(t),q(t-\tau),u(t),u(t-\tau)\right), \label{ci1}
\end{gather}
where $q, q \circ \rho^\tau \in H^{\alpha}\left(I,
\mathbb{R}^{d}\right)$ and $u, u \circ \rho^\tau
\in H^{\alpha}\left(I,\mathbb{C}^{d}\right)$.
\end{lemma}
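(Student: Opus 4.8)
The plan is to reduce the statement to a direct application of the embedding procedure of Definition~\ref{ndeo}, exactly as was done for the Euler--Lagrange equations. The key observation is that neither the cost \eqref{Pond1} nor the control system \eqref{ci} contains derivatives of $q$ or $u$ among the arguments of $L$ and $\varphi$: the only classical derivative present is the $\dot{q}$ on the left-hand side of \eqref{ci}. Hence the embedding will act non-trivially only by turning this $\dot{q}$ into $\square q$, while on the evaluation operators $L[q,u]_\tau$ and $\varphi[q,u]_\tau$ it will merely transport the underlying function space from $\mathcal{C}^{1}$ to $H^{\alpha}$.

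First I would write the dynamic constraint \eqref{ci} in the operator form \eqref{formop}. Casting it as $\mathrm{O}^{\tau}_{\mathbf{f},\mathbf{g}}=0$ with $n=1$, the first-order term $\dot{q}(t)=\left(\tfrac{d}{dt}\circ G_1\right)(q)(t)$ is produced by taking $G_1$ to be the evaluation operator associated with the projection onto the $q$-coordinate (so that $G_1(q)(t)=q(t)$) together with the constant $F_1=1$, while the zeroth-order term uses $F_0=-\varphi[\cdot,\cdot]_\tau$ and $G_0=1$, so that $F_0\cdot(\mathrm{id}\circ G_0)=-\varphi[q,u]_\tau$. Because the data $L$ and $\varphi$ depend only on $(t,q(t),q(t-\tau),u(t),u(t-\tau))$ and not on derivatives of $q$ or $u$, every evaluation operator appearing here is of order zero in the sense of \eqref{formop}.

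Next I would apply $\mbox{\rm Emb}_{\square}$ from Definition~\ref{ndeo} term by term. The rule ${d^{i}}/{dt^{i}}\mapsto{\square^{i}}/{\square t^{i}}$ sends the single genuine derivative to $\dot{q}\mapsto\square q$, and the rule $F^{k,\tau}_i\mapsto F^{\square^{k},\tau}_i=f_i[\cdot]^{\square^{k}}_\tau$ replaces $[q,u]_\tau$ by $[q,u]^{\square}_\tau$ in the arguments of $\varphi$ and $L$. Since no derivative of $q$ or $u$ sits inside those arguments, $\varphi[q,u]^{\square}_\tau(t)$ and $L[q,u]^{\square}_\tau(t)$ are syntactically the same expressions as before, now read over the Hölder classes; this yields precisely \eqref{ci1} for the constraint and \eqref{Pond11} for the cost. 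The boundary data \eqref{ic} are left untouched, while the admissibility required by the embedding forces $q,q\circ\rho^\tau\in H^{\alpha}(I,\mathbb{R}^{d})$ and $u,u\circ\rho^\tau\in H^{\alpha}(I,\mathbb{C}^{d})$.

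The step I expect to be the main obstacle is formal rather than computational: the operator calculus of \eqref{formop} and Definition~\ref{ndeo} is phrased for operators acting on a single trajectory $y$, whereas here two functions $q$ and $u$ appear. The cleanest fix is to regard the pair $(q,u)$ as a single $\mathbb{C}^{2d}$-valued curve and to note that, because $u$ enters only through a zeroth-order evaluation (no $\dot{u}$ occurs), it behaves as a parameter under the embedding; one then checks that applying $\mbox{\rm Emb}_{\square}$ to the $q$-dependent part reproduces the claimed system. Once this identification is made explicit, the remainder is exactly the bookkeeping already illustrated in the proof of the embedded Euler--Lagrange equations.
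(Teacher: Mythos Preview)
Your approach is correct and, in fact, more detailed than what the paper offers: the paper states this lemma without any proof, treating it as an immediate consequence of Definition~\ref{ndeo}. Your reduction to the operator form $\mathrm{O}^{\tau}_{\mathbf{f},\mathbf{g}}$ and term-by-term application of $\mbox{\rm Emb}_{\square}$ mirrors exactly the method used in the proof of the embedded Euler--Lagrange equations \eqref{EL2}, which is the only worked example of the embedding procedure in the paper.

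The issue you flag --- that \eqref{formop} and Definition~\ref{ndeo} are written for a single trajectory $y$ while here one has a pair $(q,u)$ --- is real but mild, and the paper simply glosses over it (as it does also in the preceding lemma on the Hamiltonian system, where three functions $q,u,p$ appear). Your fix of viewing $(q,u)$ as a single $\mathbb{C}^{d+m}$-valued curve, with $u$ entering only at zeroth order, is the natural reading and is consistent with how the paper silently handles the same point.
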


Theorem~\ref{th:pmp} generalizes Theorem~\ref{theo:pmpnd1} for
non-differentiable optimal control problems with time delay.

\begin{theorem}
\label{th:pmp}
Let $0<\alpha$, $\beta<1$ with $\alpha+\beta>1$.
Assume that $q\in H^{\alpha}\left(I, \mathbb{R}^{d}\right)$
satisfies $\square q\in H^{\alpha}\left(I, \mathbb{R}^{d}\right)$ and
$\left(L_{\square q}[q]_{\square}^{\tau}(t)
+L_{\square q_{\tau}}[q]_{\square}^{\tau}(t)\right)\cdot h(t)$
satisfies condition \eqref{nec_condition} for all
$h\in H^{\beta}\left(I, \mathbb{R}^{d}\right)$.
If $(q, u)$ is a minimizer to problem
\eqref{Pond11}--\eqref{ci1} subject to
given boundary conditions \eqref{ic}, then there
exists a co-vector function $p \in H^{\alpha}\left(I,
\mathbb{C}^{d}\right)$ such that the following conditions hold:
\begin{itemize}
\item the scale Hamiltonian systems with delay \eqref{eq:Hamnd3}
and \eqref{eq:Hamnd4};

\item the stationary conditions with delay \eqref{eq:CE2}
and \eqref{eq:CE22}.
\end{itemize}
\end{theorem}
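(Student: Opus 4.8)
The plan is to reduce the statement to the non-differentiable Euler--Lagrange equations with time delay already established in Theorem~\ref{ELDT}, by passing to an augmented (multiplier) functional. First I would adjoin the scale dynamics \eqref{ci1} to the cost \eqref{Pond11} through a co-vector $p \in H^{\alpha}(I,\mathbb{C}^{d})$ and consider the augmented functional
$$
J^{\tau}_{\square}[q,u,p] = \int_{t_1}^{t_2}\Bigl( H[q,u,p]_{\tau}^{\square}(t) - p(t)\cdot\square q(t)\Bigr)\,dt,
$$
where $H[q,u,p]_{\tau}^{\square}(t)= L[q,u]_{\tau}^{\square}(t)+p(t)\cdot\varphi[q,u]_{\tau}^{\square}(t)$. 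On the constraint set defined by \eqref{ci1} one has $J^{\tau}_{\square}=I^{\tau}_{\square}$, so a minimizer $(q,u)$ of \eqref{Pond11}--\eqref{ci1}, together with a suitable multiplier $p$, is a free (unconstrained) extremal of $J^{\tau}_{\square}$ in the three arguments $q$, $u$ and $p$. I would then extract the four groups of conditions by taking independent variations in each argument.

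The variation in $p$ is purely algebraic: since $\partial_p\bigl(H-p\cdot\square q\bigr)=\varphi-\square q$ and $p$ carries no boundary restriction, the fundamental lemma immediately returns the state equation $\square q=\varphi=H_{p}$, which is the first line of both \eqref{eq:Hamnd3} and \eqref{eq:Hamnd4}. For the variation in $q$ I would apply Theorem~\ref{ELDT} to the augmented Lagrangian $\widetilde{L}:=H-p\cdot\square q$, viewed as a function of $(t,q,\square q,q_{\tau},\square q_{\tau})$ with $u,p$ frozen. Because $L$ and $\varphi$ carry no $\square q$-dependence, one has $\widetilde{L}_{\square q}=-p$, $\widetilde{L}_{\square q_{\tau}}=0$, $\widetilde{L}_{q}=H_{q}$ and $\widetilde{L}_{q_{\tau}}=H_{q_{\tau}}$; substituting these into \eqref{EL3} collapses the left-hand side to $\square(-p)$ and yields exactly the adjoint equations $\square p=-H_{q}-H_{q_{\tau}}(\cdot+\tau)$ on $[t_1,t_2-\tau]$ and $\square p=-H_{q}$ on $[t_2-\tau,t_2]$, i.e.\ the second lines of \eqref{eq:Hamnd3} and \eqref{eq:Hamnd4}. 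Finally, the variation in $u$ carries no scale derivative, so no integration by parts is needed: writing the first variation $\int_{t_1}^{t_2}\bigl(H_{u}(t)\cdot v(t)+H_{u_{\tau}}(t)\cdot v(t-\tau)\bigr)\,dt=0$, performing the change of variables $t=s+\tau$ in the delayed term and splitting $[t_1,t_2]$ at $t_2-\tau$ exactly as in the passage from \eqref{II} to \eqref{I}, the fundamental lemma gives \eqref{eq:CE2} on $[t_1,t_2-\tau]$ and \eqref{eq:CE22} on $[t_2-\tau,t_2]$.

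The routine part is the interval-splitting bookkeeping induced by the delay, which is identical to the computation in the proof of Theorem~\ref{ELDT}. The main obstacle is the legitimacy of the multiplier step in the non-differentiable category: one must guarantee the existence of a co-vector $p$ that is itself $H^{\alpha}$-Hölder, so that the product $p\cdot h$ falls in the range $\alpha+\beta>1$ demanded by the quantum Leibniz rule (Theorem~\ref{theo:mult}), and that the relevant primitive $p\cdot h$ satisfies condition \eqref{nec_condition}, so that the quantum Barrow rule (Theorem~\ref{Barrow}) may be invoked during the integration by parts for the $q$-variation. Verifying that the constrained extremal is genuinely a critical point of $J^{\tau}_{\square}$ — the scale analogue of the Lagrange multiplier rule — is precisely where the regularity hypotheses on $q$ and $\square q$ and the Barrow-type condition are used; once this is secured, the three variations reproduce \eqref{eq:Hamnd3}--\eqref{eq:CE22}, and coherence with the classical Theorem~\ref{theo:pmpnd1} follows directly from the definition of the embedding in Definition~\ref{ndeo}.
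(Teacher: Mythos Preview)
Your proposal is correct and follows essentially the same route as the paper: form the augmented functional $J^{\tau}_{\square}[q,u,p]=\int_{t_1}^{t_2}\bigl(H[q,u,p]_{\tau}^{\square}(t)-p(t)\cdot\square q(t)\bigr)\,dt$ via the Lagrange multiplier rule, then apply the non-differentiable Euler--Lagrange equations \eqref{EL3} of Theorem~\ref{ELDT} in each of the three arguments $q$, $u$, $p$ to recover \eqref{eq:Hamnd3}, \eqref{eq:CE2} and the state equation (the paper treats only $[t_1,t_2-\tau]$ explicitly and declares the other interval analogous). Your additional remarks on the regularity needed for $p$ and the legitimacy of the multiplier step go beyond what the paper actually verifies, but the underlying argument is the same.
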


\begin{proof}
We prove the theorem  only in the interval $t_{1}\leq t\leq t_{2}-\tau$
(the reasoning is similar in interval $t_{2}-\tau\leq t\leq t_{2}$).
Using the Lagrange multiplier rule, \eqref{Pond11}--\eqref{ci1}
is equivalent to minimize the augmented functional $J^{\tau}_{\square}[q,u,p]$
defined by
\begin{equation}
\label{eq:pcond}
J^{\tau}_{\square}[q,u,p]
= \int_{t_{1}}^{t_{2}} \left[
H\left(t,q(t),q(t-\tau),u(t),u(t-\tau),p(t)\right)-p(t)\cdot\square
q(t)\right]dt.
\end{equation}
The necessary optimality conditions \eqref{eq:Hamnd3} and
\eqref{eq:CE2} are obtained from the Euler--Lagrange equations
\eqref{EL3} applied to functional \eqref{eq:pcond}:
\begin{multline*}
\begin{cases}
\frac{\square}{\square t} \left(\mathbb{L}_{\square
q}[q,u,p]_{\square}^{\tau}(t)+\mathbb{L}_{\square
q_{\tau}}[q,u,p]_{\square}^{\tau}(t+\tau)\right)\\
\qquad\qquad=\mathbb{L}_{q}[q,u,p]_{\square}^{\tau}(t)
+\mathbb{L}_{ q_{\tau}}[q,u,p]_{\square}^{\tau}(t+\tau)\\
\frac{\square}{\square t} \left(\mathbb{L}_{\square
u}[q,u,p]_{\square}^{\tau}(t)+\mathbb{L}_{\square
u_{\tau}}[q,u,p]_{\square}^{\tau}(t+\tau)\right)\\
\qquad\qquad =\mathbb{L}_{u}[q,u,p]_{\square}^{\tau}(t)
+\mathbb{L}_{ u_{\tau}}[q,u,p]_{\square}^{\tau}(t+\tau)\; \\
\frac{\square}{\square t} \left(\mathbb{L}_{\square p}[q,u,p]_{\square}^{\tau}(t)
+\mathbb{L}_{\square p_{\tau}}[q,u,p]_{\square}^{\tau}(t+\tau)\right)\\
\qquad\qquad = \mathbb{L}_{ p}[q,u,p]_{\square}^{\tau}(t)
+\mathbb{L}_{p_{\tau}}[q,u,p]_{\square}^{\tau}(t+\tau)
\end{cases}\\
\Leftrightarrow\;
\begin{cases}
\square p(t)=-H_{q}[q,u,p]_{\square}^{\tau}(t)-H_{q_{\tau}}[q,u,p]_{\square}^{\tau}(t+\tau)\\
0=H_{u}[q,u,p]_{\square}^{\tau}(t)+H_{u_{\tau}}[q,u,p]_{\square}^{\tau}(t+\tau)\\
0=-\square q(t)+H_{p}[q,u,p]_{\square}^{\tau}(t),
\end{cases}
\end{multline*}
where $\mathbb{L}[q,u,p]_{\square}^{\tau}(t)=H[q,u,p]_{\square}^{\tau}(t)-p(t)
\cdot\square q(t)$.
\end{proof}

\begin{remark}
In the differentiable case Theorem~\ref{th:pmp}
reduces to Theorem~\ref{theo:pmpnd1}.
\end{remark}

\begin{remark}
The first equations in the scale Hamiltonian system with delay
\eqref{eq:Hamnd3} and \eqref{eq:Hamnd4} are nothing else than the
scale control system \eqref{ci1}.
\end{remark}

\begin{remark}
In classical mechanics, $p$ is called the \emph{generalized
momentum}. In the language of optimal control,
$p$ is known as the adjoint variable \cite{CD:MR29:3316b}.
\end{remark}

\begin{definition}
\label{scale:Pont:Ext}
A triplet $(q,u,p)$ satisfying the conditions
of Theorem~\ref{th:pmp} will be called a
\emph{scale Pontryagin extremal}.
\end{definition}

\begin{remark}
\label{rem:cp:CV:EP:EEL}
If $\varphi\left(t,q,q_\tau,u,u_\tau\right) = u$, then
Theorem~\ref{th:pmp} reduces to Theorem~\ref{ELDT}.
Let us verify this in the interval $t_{1}\leq t\leq t_{2}-\tau$
(the procedure is similar for $t_{2}-\tau\leq t\leq t_{2}$).
The stationary condition \eqref{eq:CE2} gives
$p(t) = L_{u}[q]_{\tau}^{\square}(t)+L_{u_{\tau}}[q]_{\tau}^{\square}(t+\tau)$
and the second equation in the scale Hamiltonian
system with delay \eqref{eq:Hamnd3} gives
$\square p(t) = L_{q}[q]_{\tau}^{\square}(t)+L_{q_{\tau}}[q]_{\tau}^{\square}(t)$.
Comparing both equalities, one obtains the non-differentiable
Euler--Lagrange equations with time delay \eqref{EL3}. In other
words, the scale Pontryagin extremals
(Definition~\ref{scale:Pont:Ext}) are a generalization of the
non-differentiable Euler--Lagrange extremals (Definition~\ref{nde}).
\end{remark}

We conclude from Theorem~\ref{th:pmp} that the coherence principle
is also respected for non-differentiable optimal control problems with time delay.


\section*{Acknowledgments}

This work was supported by {\it FEDER} funds through
{\it COMPETE} --- Operational Programme Factors of Competitiveness
(``Programa Operacional Factores de Competitividade'')
and by Portuguese funds through the
{\it Center for Research and Development
in Mathematics and Applications} (University of Aveiro)
and the Portuguese Foundation for Science and Technology
(``FCT --- Funda\c{c}\~{a}o para a Ci\^{e}ncia e a Tecnologia''),
within project PEst-C/MAT/UI4106/2011
with COMPETE number FCOMP-01-0124-FEDER-022690.
The first author was also supported by the FCT post-doc
fellowship SFRH/BPD/51455/2011, from program {\it Ci\^{e}ncia Global}.



\end{document}